\title[A permutation group acting transitively]{A permutation group acting transitively on certain collections of models}
\author{Samuel M. Corson}
\address{E. T. S. I. I. Universidad Polit\'{e}cnica de Madrid, Jos\'{e} Guti\'{e}rrez Abascal 2, 28006 Madrid, Spain}
\email{sammyc973@gmail.com}
\author{Saharon Shelah}
\address{Einstein Institute of Mathematics, The Hebrew University of Jerusalem, Jerusalem 91904 Israel}
\address{Department of Mathematics, Rutgers University, Piscataway, NJ 08854 USA}
\email{shelah@math.huji.ac.il}
\theoremstyle{definition}\newtheorem{theorem}{Theorem}
\theoremstyle{definition}
\theoremstyle{definition}
\theoremstyle{definition}
\theoremstyle{definition}
\theoremstyle{definition}
\theoremstyle{definition}
\theoremstyle{definition}
\theoremstyle{definition}\newtheorem{corollary}[theorem]{Corollary}
\theoremstyle{definition}
\theoremstyle{definition}\newtheorem{definition}{Definition}
\theoremstyle{definition}
\theoremstyle{definition}
\theoremstyle{definition}
\theoremstyle{definition}
\theoremstyle{definition}\newtheorem{lemma}[theorem]{Lemma}
\theoremstyle{definition}
\theoremstyle{definition}
\theoremstyle{definition}
\theoremstyle{definition}
\theoremstyle{definition}\newtheorem{definitions}[theorem]{Definitions}
\theoremstyle{definition}\newtheorem{notation}[theorem]{Notation}
\newcommand{\Sym}{\operatorname{Sym}}
\newcommand{\dom}{\operatorname{dom}}
\newcommand{\Aut}{\operatorname{Aut}}
\newcommand{\It}{\operatorname{It}}
\newcommand{\ran}{\operatorname{ran}}
\newcommand{\set}{\operatorname{set}}
\newcommand{\fix}{\operatorname{fix}}
\begin{document}

\keywords{symmetric group, relational structures}

\subjclass[2020]{Primary 20A15; Secondary 03E17, 03E50}

\thanks{The work of the first author was supported by Basque Government Grant IT1483-22 and Spanish Government Grants PID2019-107444GA-I00 and PID2020-117281GB-I00.  The work of the second author is supported by ISF 2320/23: The Israel Science Foundation (ISF) (2023-2027).  Paper number 1263 on Shelah's archive.}

\begin{abstract}
It is shown, from $\sigma$-centered Martin's Axiom, that there exists a proper dense subgroup of the symmetric group on a countably infinite set whose natural action on sufficiently flexible relational structures is transitive.  This allows us to give consistent positive answers to some questions of Peter M. Neumann from the 1980s.
\end{abstract}

\maketitle

\begin{section}{Introduction}
Let $\Omega$ be a countably infinite set and $\Sym(\Omega)$ be the group of all bijections on $\Omega$.  Recall that $\Sym(\Omega)$ has the natural topology of pointwise convergence, under which it is a Polish group.  In this paper we will consider relational models (in the sense of universal algebra) $\mathbb{M} = (\Omega, \{R_i\}_{i \in I})$ whose universe is $\Omega$.  Each permutation $f \in \Sym(\Omega)$ trivially gives an isomorphism $$f: \mathbb{M} = (\Omega, \{R_i\}_{i \in I}) \rightarrow \mathbb{M}' = (\Omega, \{R_i'\}_{i \in I})$$ where for each $i \in I$, $R_i'(f(a_0), \ldots, f(a_{n-1}))$ if and only if $R_i(a_0, \ldots, a_{n-1})$.  Thus the action of $\Sym(\Omega)$ on $\Omega$ extends to a transitive action on the collection $\overline{\mathbb{M}}$ of those elements of the isomorphism class of a model $\mathbb{M}$ whose universe is $\Omega$ (recall that a group action $G \curvearrowright X$ is \emph{transitive} if for each $x, x' \in X$ there exists $g \in G$ such that $gx = x'$).  It is natural to ask whether the restriction of this action to some proper (dense) subgroup $G < \Sym(\Omega)$ is also transitive.

To manage the reader's expectations, we consider a simple situation in which this can fail.  Take $\mathbb{M} = (\Omega, \{<\})$ where $<$ is a linear order on $\Omega$ which is order isomorphic to the order on the set $\omega$ of natural numbers.  It is straightforward to see that if $G \leq \Sym(\Omega)$ has transitive action on $\overline{\mathbb{M}}$ then $G = \Sym(\Omega)$.  So, to allow a proper subgroup to have a transitive action on $\overline{\mathbb{M}}$ there should be some reasonable flexibility in the automorphisms of $\mathbb{M}$.  The following definition conveys such a notion.

\begin{definition}
We will say that a model $\mathbb{M} = (\Omega, \{R_i\}_{i \in I})$ is \emph{inductively flexible} if there exists a collection $\mathcal{F}$ of partial automorphisms of $\mathbb{M}$ (i.e. each element of $\mathcal{F}$ is a restriction of an automorphism on $\mathbb{M}$) such that

\begin{enumerate}

\item each $f \in \mathcal{F}$ has finite domain;

\item $\emptyset \in \mathcal{F}$;

\item $f \in \mathcal{F}$ implies $f^{-1} \in \mathcal{F}$; and 

\item for each $f\in \mathcal{F}$ and $a \in \Omega \setminus \dom(f)$, the set $\{b \in \Omega : f \cup \{(a, b)\} \in \mathcal{F}\}$ is infinite.

\end{enumerate}
\end{definition}

Under a mild set-theoretic assumption we can find a \emph{single} proper dense  subgroup of $\Sym(\Omega)$ whose action on $\overline{\mathbb{M}}$ is transitive for \emph{every} such $\mathbb{M}$.

\begin{theorem}\label{main} (ZFC + MA($\sigma$-centered))  There exists a proper, dense subgroup $G < \Sym(\Omega)$ such that if $\mathbb{M}$ is inductively flexible then $G$ acts transitively on $\overline{\mathbb{M}}$.
\end{theorem}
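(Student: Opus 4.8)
The plan is to reduce the transitivity requirement, for all inductively flexible models \emph{simultaneously}, to a family of at most $\mathfrak{c}$ purely combinatorial tasks, and then to satisfy all of them (together with density and properness) by a transfinite recursion of length $\mathfrak{c}$ in which $\mathrm{MA}(\sigma\text{-centered})$ is invoked at each stage. First I would record two reductions. Since $\Sym(\Omega)$ already acts transitively on $\overline{\mathbb M}$, a subgroup $G$ acts transitively there as soon as the $G$-orbit of $\mathbb M$ is all of $\overline{\mathbb M}$; so it suffices, for every inductively flexible $\mathbb M$ and every $\mathbb M'\in\overline{\mathbb M}$, to place a single isomorphism $\mathbb M\to\mathbb M'$ into $G$. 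Fixing such $\mathbb M,\mathbb M'$, a witnessing family $\mathcal F$ for $\mathbb M$, and one isomorphism $h\colon\mathbb M\to\mathbb M'$, the family $\mathcal G=\{h\circ f: f\in\mathcal F\}$ consists of finite partial isomorphisms $\mathbb M\to\mathbb M'$ and, by conditions (2)--(4) on $\mathcal F$, has the two-sided extension property: $\emptyset\in\mathcal G$, and for each $g\in\mathcal G$ every $a\in\Omega\setminus\dom(g)$ (resp.\ every $b\in\Omega\setminus\ran(g)$) admits infinitely many $b$ (resp.\ $a$) with $g\cup\{(a,b)\}\in\mathcal G$; call such a family flexible. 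Because partial isomorphisms are closed under restriction, any bijection realized as a union of a chain of members of $\mathcal G$ is automatically an isomorphism $\mathbb M\to\mathbb M'$. The key point is that a flexible family is just a subset of the countable set of finite partial injections of $\Omega$, so there are only $\mathfrak c=2^{\aleph_0}$ of them; the transitivity demands thus collapse to the $\le\mathfrak c$ tasks $\mathrm{Req}(\mathcal G)$, namely ``$G$ contains a bijection that is the union of a chain in $\mathcal G$,'' one for each flexible $\mathcal G$.

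Next I would set up the recursion. Fix a permutation $\sigma$ moving infinitely many points, so $\sigma\notin\Sym_{\mathrm{fin}}(\Omega)$, enumerate all flexible families as $\langle\mathcal G_\alpha:\alpha<\mathfrak c\rangle$, and build an increasing chain of subgroups with $G_0=\Sym_{\mathrm{fin}}(\Omega)$ and $G=\bigcup_{\alpha<\mathfrak c}G_\alpha$. The starting group makes $G$ dense, since the finitary permutations already are, and I will maintain the invariants $\sigma\notin G_\alpha$ and $|G_\alpha|<\mathfrak c$; the latter survives limits because $|\alpha|<\mathfrak c$ gives $\max(|\alpha|,\aleph_0)<\mathfrak c$. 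At stage $\alpha$ I produce, using $\mathrm{MA}(\sigma\text{-centered})$, a bijection $g_\alpha$ witnessing $\mathrm{Req}(\mathcal G_\alpha)$, and set $G_{\alpha+1}=\langle G_{<\alpha}\cup\{g_\alpha\}\rangle$.

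The single-stage construction is the heart of the matter. I would force with the poset $\mathbb P_\alpha$ whose conditions are finite partial injections lying in $\mathcal G_\alpha$ (if convenient, decorated with a finite set of words over $G_{<\alpha}\cup\{g_\alpha^{\pm1}\}$ recording diagonalization commitments), ordered by extension; conditions with the same underlying partial injection are compatible, so $\mathbb P_\alpha$ is $\sigma$-centered. The relevant dense sets are, for each $n$, the conditions with $n\in\dom$ and those with $n\in\ran$, dense by flexibility of $\mathcal G_\alpha$, which force the generic union $g_\alpha$ to be a bijection that is a chain in $\mathcal G_\alpha$; and, for each nontrivial word $w$ over $G_{<\alpha}\cup\{g_\alpha^{\pm1}\}$ that actually involves $g_\alpha$, the set $E_w$ of conditions forcing $w\neq\sigma$ pointwise, which keeps $\sigma$ out of $G_{\alpha+1}$, since words not involving $g_\alpha$ already differ from $\sigma$ as $\sigma\notin G_{<\alpha}$. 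There are only $|G_{<\alpha}|\cdot\aleph_0<\mathfrak c$ such dense sets, so $\mathrm{MA}(\sigma\text{-centered})$ yields a filter meeting all of them and hence the desired $g_\alpha$. The limit $G=\bigcup_\alpha G_\alpha$ is then a subgroup, is dense (it contains $\Sym_{\mathrm{fin}}(\Omega)$), is proper (as $\sigma\notin G$), and is transitive on every $\overline{\mathbb M}$, because the flexible family $\mathcal G=\{h\circ f:f\in\mathcal F\}$ attached to any $\mathbb M,\mathbb M',h$ occurs as some $\mathcal G_\alpha$, whence $g_\alpha\in G$ is an isomorphism $\mathbb M\to\mathbb M'$.

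I expect the main obstacle to be verifying that each $E_w$ is dense: given a condition, one must steer the still-undetermined values of $g_\alpha$ so that the composite $w$ disagrees with $\sigma$ at some point while never leaving $\mathcal G_\alpha$. I would prove this by choosing fresh points along the evaluation of $w(x)$ and repeatedly applying the infinite-branching clause (4): at each application only one value is forbidden (the one that would make $w$ agree with $\sigma$, or that would violate injectivity), and infinitely many remain available, so a legal extension forcing $w\neq\sigma$ always exists. Extra care is needed when $g_\alpha$ and $g_\alpha^{-1}$ both occur in $w$, since a single new value of $g_\alpha$ is then constrained from two sides at once; organizing these interactions so that flexibility still leaves room to diagonalize is the delicate combinatorial core of the argument.
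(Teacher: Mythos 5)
Your overall architecture is sound and genuinely parallels the paper's: reduce to at most $2^{\aleph_0}$ combinatorial tasks indexed by ``flexible families'' of finite partial isomorphisms, run a transfinite recursion, and at each stage apply MA($\sigma$-centered) to the (trivially $\sigma$-centered, being countable) poset of finite partial isomorphisms. Your reductions at the start (one orbit suffices; $\mathcal{F}$ has the two-sided extension property via closure under inverses; a bijective union of a chain of partial isomorphisms is an isomorphism) are all correct, as is the cardinality bookkeeping. Where you diverge from the paper is the properness invariant: you fix a single target $\sigma$ and diagonalize to keep $\sigma\notin G_\alpha$, and you get density for free from $G_0=\Sym_{\mathrm{fin}}(\Omega)$, whereas the paper maintains that the entire generating set is \emph{strongly independent} (every nontrivial reduced word in the generators has finitely many fixed points), builds a countable dense strongly independent set separately (Lemma \ref{onlycountablestronglyind}), and gets properness because $G$ then contains no nontrivial permutation with infinitely many fixed points.

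The genuine gap is the density of $E_w$, which you correctly identify as the core but then dispose of with a heuristic that does not survive scrutiny. The claim that ``at each application only one value is forbidden'' is wrong in an essential way: the problem is not that finitely many values are forbidden (that would be fine), but that at some steps \emph{no} free choice is available because the trajectory of the evaluation re-enters $\dom$ or $\ran$ of the condition already built, so the next value of $g_\alpha^{\pm1}$ is forced. This happens unavoidably, and your choice of $G_0=\Sym_{\mathrm{fin}}(\Omega)$ makes it worse: every $G_{<\alpha}$ contains permutations $u$ with cofinite fixed-point sets, and in a segment $x^{-1}ux$ all but finitely many admissible choices $b=g_\alpha(z)$ may satisfy $u(b)=b$, forcing $g_\alpha^{-1}(u(b))=z$, i.e.\ the segment collapses and the evaluation backtracks. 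Cascading collapses can leave the output fully determined as $v(z)$ for one of several $v\in G_{<\alpha}$ depending on which segments collapsed, and you must then argue that $z$ can be chosen in advance (or the collapse pattern steered) so that the determined output differs from $\sigma(z)$ --- note that $v\neq\sigma$ does not give $v(z)\neq\sigma(z)$ at a prescribed $z$, and the sets $\{z:v(z)=\sigma(z)\}$ for the finitely many candidate $v$ can cover $\Omega$. Tracking exactly these interactions is what the paper's itinerary formalism and Lemma \ref{essentiallem} are built for, and it is where the paper spends essentially all of Section \ref{stind}; the paper's strong-independence invariant moreover guarantees that the subwords not using $x_0$ have only finitely many fixed points, which is what makes its final counting of bad itineraries work and is exactly the property your $u\in G_{<\alpha}$ lack. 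So the plan is viable, but the lemma it rests on is unproved and is strictly harder in your setting than your sketch suggests.
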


The assertion MA($\sigma$-centered) is equivalent to the equality $\mathfrak{p} = 2^{\aleph_0}$ \cite{Bell}, and is a consequence of Martin's Axiom, which is in turn a consequence of the continuum hypothesis.  Thus Theorem \ref{main} gives the consistency, with ZFC, of the existence of such a subgroup $G$.  One can think of many relational structures which are inductively flexible, especially among Fra\"{i}ss\'{e} limits.  Two examples of inductively flexible models are an order on $\Omega$ isomorphic to $\mathbb{Q}$ and a partition of $\Omega$ into infinitely many infinite sets.  Theorem \ref{main} therefore allows us to derive consistent positive solutions to some old Kourovka Notebook questions \cite[9.41 c) and 9.42]{KhMaz} of Peter M. Neumann (see Corollaries \ref{Qtype} and \ref{sections}).

We give some technical preliminary arguments in Section \ref{stind} and then derive the main results in Section \ref{mainconsequences}.

\end{section}

\begin{section}{Strongly independent sets} \label{stind}

In this section we consider elements in the free group $F = F(\{x_n\}_{n < \omega})$ of countably infinite rank.  We use the convention that an element $W \in F$ is a \emph{reduced} word, meaning that if $x_i$ appears in $W$ then $x_i^{-1}$ does not appear immediately before or after it in $W$.  Thus an element of $F$ is \emph{nontrivial} if it does not represent the trivial element, i.e. if it is a word of length at least one.  We write $W(x_0, \ldots, x_n)$ to mean that the only letters which appear in $W$ are in the set $\{x_0^{\pm 1}, \ldots, x_n^{\pm 1}\}$, but possibly only a proper subset appears.  We will say that a reduced word $W(x_0, \ldots, x_n)$ \emph{uses $x_i$} if either $x_i$ or $x_i^{-1}$ appears in $W$.  Note that if $W(x_0, x_1, \ldots, x_n)$ uses $x_0$ then $W$ can be decomposed uniquely as an alternating concatenation of maximal nontrivial subwords $U(x_1, \ldots, x_n)$ which do not use $x_0$ and maximal nontrivial words $x_0^k$ with $k \in \mathbb{Z} \setminus \{0\}$: $$W \equiv (U_{m + 1}(x_1, \ldots, x_n))x_0^{k_m}\cdots x_0^{k_1}U_1(x_1, \ldots, x_n)x_0^{k_0}(U_0(x_1, \ldots, x_n))\eqno{(\dagger)}$$ where the rightmost term $U_0$ and/or the leftmost term $U_{m + 1}$ might not exist.  For such a decomposition we write $L'(W) = (\sum_{i = 0}^m |k_m|) + J(W)$ where $J(W) = m + 2$ in case both $U_0$ and $U_{m+1}$ exist, $J(W) = m + 1$ in case exactly one of $U_0$ and $U_{m + 1}$ exist, and $J(W) = m$ in case neither $U_{m + 1}$ nor $U_0$ exist.  Thus, $L'(W)$ adds the number of occurences of $x_0$ and of $x_0^{-1}$ and of subwords of form $U_i$ in the decomposition $(\dagger)$.  Thus, for a word $W(x_0, \ldots, x_n)$ which uses $x_0$ we can write $$W \equiv V_{L'(W) - 1}V_{L'(W) - 2} \cdots V_0\eqno{(\dagger\dagger)}$$ where each $V_i$ is $x_0$, or $x_0^{-1}$, or one of the $U_i$ in the decomposition $(\dagger)$.  

For the next definition, we take $c$ to be a symbol such that $c \notin \Omega$.  Also, a \emph{partial bijection on $\Omega$} is a function $f$ such that for some $X \subseteq \Omega$ and $g \in \Sym(\Omega)$ we have $f = g \upharpoonright X$ (we allow $X =\Omega$).

\begin{definition}  Suppose that $W(x_0, \ldots, x_n)$ is a word which uses $x_0$ and $f_0, f_1, \ldots, f_n$ is a list of partial bijections on $\Omega$.  We say that a tuple $(t_{L'(W)}, \ldots, t_1, t_0)$ is an \emph{itinerary of $W$ with respect to $f_0, \ldots, f_n$} provided

\begin{enumerate}

\item for each $0 \leq i \leq L'(W)$ we have $t_i \in \Omega \sqcup \{c\}$, and $t_i \in \Omega$ for some $i$;

\item if $t_i \in \Omega$ and $0 \leq i < L'(W)$ then

\[
t_{i + 1} = \left\{
\begin{array}{ll}
 V_i(f_0, \ldots, f_n)) t_i
                                            & \text{if } t_i\in \dom(V_i(f_0, \ldots, f_n)), \\
c 
                                            & \text{otherwise };
\end{array}
\right.
\]

\item if $t_{i + 1} \in \Omega$ and $0 \leq i < L'(W)$ then

\[
t_i = \left\{
\begin{array}{ll}
 (V_i(f_0, \ldots, f_n)))^{-1} t_{i + 1}
                                            & \text{if } t_{i + 1} \in \ran(V_i(f_0, \ldots, f_n)), \\
c 
                                            & \text{otherwise }.
\end{array}
\right.
\]

\item if $t_j = c$ and $t_i \in \Omega$ for some $i < j$ then $t_{\ell} = c$ for all $L'(W) \geq \ell \geq j$;

\item if $t_j = c$ and $t_i \in \Omega$ for some $j < i$ then $t_{\ell} = c$ for all $0 \leq \ell \leq j$.

\end{enumerate}

\end{definition}

The idea of an itinerary is to take an element of $\Omega$ and place it somewhere in the word and move it through the word backwards and forwards as much as possible, with a symbol $c$ showing where the movement will not be defined.  The following is clear.

\begin{lemma}\label{itineraryisdeterminedbycoord}  Suppose $W(x_0, \ldots, x_n)$ uses $x_0$, the functions $f_0, \ldots, f_n$ are partial bijections on $\Omega$ and $(t_{L'(W)}, \ldots, t_0)$, and $(t_{L'(W)}', \ldots, t_0')$ are two itineraries of $W$ with respect to $f_0, \ldots, f_n$ with $t_i = t_i' \in \Omega$ for some $0 \leq i \leq L'(W)$.  Then the two itineraries are equal.  
\end{lemma}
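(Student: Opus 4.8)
The plan is to reduce the statement to the claim that the value of a \emph{single} $\Omega$-valued coordinate already determines the entire itinerary; once this is established the lemma follows at once, since the hypothesis supplies some $i$ with $t_i = t_i' \in \Omega$, whence $t_j = t_j'$ for every $j$. Two features of the definition drive the argument. First, conditions (2) and (3) make the itinerary locally deterministic wherever its value lies in $\Omega$: if $t_j \in \Omega$ then (2) forces $t_{j+1}$ from $t_j$, and symmetrically if $t_{j+1} \in \Omega$ then (3) forces $t_j$ from $t_{j+1}$ (these prescriptions are coherent because each $V_i(f_0, \ldots, f_n)$ is a partial bijection). Second, conditions (4) and (5) make the symbol $c$ absorbing away from any $\Omega$-valued coordinate: once a coordinate strictly above (resp.\ below) an $\Omega$-valued coordinate equals $c$, every coordinate above (resp.\ below) it equals $c$ as well.

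Fix $i$ with $t_i \in \Omega$. I would prove by induction, running upward from $i$, that each $t_j$ with $j \geq i$ is determined by $t_i$. In the step passing from $j$ to $j+1$ (with $i \leq j < L'(W)$) there are two cases. If $t_j \in \Omega$, then condition (2) prescribes $t_{j+1}$ outright. If instead $t_j = c$, then necessarily $j > i$, since $t_i \in \Omega$ differs from $c$; applying condition (4) with the $\Omega$-valued coordinate $i < j$ yields $t_{j+1} = c$. Either way $t_{j+1}$ is determined. The downward induction, running from $i$ to $0$, is entirely parallel: to pass from $t_{j+1}$ to $t_j$ (with $0 \leq j < i$), if $t_{j+1} \in \Omega$ use condition (3), while if $t_{j+1} = c$ then $j+1 < i$ and condition (5) forces $t_j = c$. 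Together these inductions exhibit the whole tuple $(t_{L'(W)}, \ldots, t_0)$ as a function of the value $t_i$ alone.

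The only point requiring care, which I would treat as the crux, is the case bookkeeping in the inductive steps: one must check that whenever the absorbing conditions (4) and (5) are invoked, the coordinate witnessing the $c$ lies strictly on the correct side of the fixed $\Omega$-valued coordinate $i$. This is precisely why the observation $t_i \neq c$ is used to rule out $j = i$ before appealing to (4), and $j+1 = i$ before appealing to (5). With the two inductions in hand, applying the determination to the hypothesis $t_i = t_i'$ gives $t_j = t_j'$ for all $0 \leq j \leq L'(W)$, which is the desired conclusion.
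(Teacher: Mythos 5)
Your argument is correct: the paper itself offers no proof (it introduces the lemma with ``The following is clear''), and your two-directional induction from the fixed $\Omega$-valued coordinate, using (2)/(3) for the deterministic steps and (4)/(5) for the absorbing $c$ steps, is exactly the intended routine verification. The bookkeeping point you flag ($t_i \neq c$ ruling out $j = i$ before invoking (4), and $j+1 = i$ before invoking (5)) is handled correctly, so nothing is missing.
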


\begin{notation}\label{itset}  Suppose that $W(x_0, \ldots, x_n)$ uses $x_0$ and the functions $f_0, \ldots, f_n$ are partial bijections on $\Omega$.  We use $\It(W, f_0, \ldots, f_n)$ to denote the set of all itineraries of $W(x_0, \ldots, x_n)$ with respect to $f_0, \ldots, f_n$.  For $L'(W) \geq i \geq 0$ and $a \in \Omega$ we let $\It(W, f_0, \ldots f_n, i, a)$ be the unique (by Lemma \ref{itineraryisdeterminedbycoord}) element of $\It(W, f_0, \ldots, f_n)$ which has $t_i = a$.  If $\overline{t} = (t_{L'(W)}, \ldots, t_0) \in \It(W, f_0, \ldots, f_n)$ then let $\set(\overline{t})$ be the set $\{t_{L'(W)}, \ldots, t_0\} \setminus \{c\}$.

\end{notation}

\begin{definition}
If $\mathbb{M} = (\Omega, \{R_i\}_{i \in I})$ is inductively flexible, and $\mathcal{F}$ witnesses this, then we say that $(\mathbb{M}, \mathcal{F})$ is a \emph{flexible pair}.
\end{definition}

We now give a technical lemma.

\begin{lemma}\label{essentiallem}  Suppose that 

\begin{enumerate}
\item $(\mathbb{M}, \mathcal{F})$ is a flexible pair;

\item $h, f_1, \ldots, f_n \in \Sym(\Omega)$;

\item words $W_0(x_0, \ldots, x_n), \ldots, W_s(x_0, \ldots, x_n)$ each utilize $x_0$; and

\item $g \in \mathcal{F}$.

\end{enumerate}

Then each of the following hold.

\begin{enumerate}[(i)]

\item If $a \in \Omega \setminus \dom(g)$ then there exists $b \in \Omega$ such that $g' := g \cup \{(a, b)\} \in \mathcal{F}$, and for any $0 \leq p \leq s$ and $\overline{t'} \in \It(W_p, h \circ g', f_1, \ldots, f_n)$, if $t_i' = t_j' \in \Omega$ for some $0 \leq i < j \leq L'(W_p)$ then there exist $0 \leq i_0 < j_0 \leq L'(W)$ with $\overline{t} = \It(W_p, h \circ g, f_1, \ldots, f_n, i_0, t_{i_0}')$ satisfying $t_{i_0} = t_{j_0} \in \Omega$.

\item If $b \in \Omega \setminus \ran(g)$ then there exists $a \in \Omega$ such that $g' := g \cup \{(a, b)\} \in \mathcal{F}$, and for any $0 \leq p \leq s$ and $\overline{t'} \in \It(W_p, h \circ g', f_1, \ldots, f_n)$, if $t_i' = t_j' \in \Omega$ for some $0 \leq i < j \leq L'(W_p)$ then there exist $0 \leq i_0 < j_0 \leq L'(W)$ with $\overline{t} = \It(W_p, h \circ g, f_1, \ldots, f_n, i_0, t_{i_0}')$ satisfying $t_{i_0} = t_{j_0} \in \Omega$.

\end{enumerate}

\end{lemma}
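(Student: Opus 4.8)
The plan is to prove (i) in detail; part (ii) is entirely analogous, with the roles of domain and range -- equivalently, of $a$ and $h(b)$, and of the forward letter $x_0$ and the backward letter $x_0^{-1}$ -- interchanged, this time choosing the newly added point $a$ generically while $h(b)$ stays fixed. For (i), by condition (4) of inductive flexibility the set $B=\{b\in\Omega: g\cup\{(a,b)\}\in\mathcal F\}$ is infinite, and the task is to select $b\in B$ avoiding a finite forbidden set. Fix a candidate and write $g'=g\cup\{(a,b)\}$. Two structural facts drive everything. First, $a\notin\dom(g)=\dom(h\circ g)$. Second, since every element of $\mathcal F$ is an injective partial map, $g'\in\mathcal F$ forces $b\notin\ran(g)$, hence $h(b)\notin h(\ran g)=\ran(h\circ g)$. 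Consequently the block--maps $V_i$ of $(\dagger\dagger)$ built from $h\circ g$ and from $h\circ g'$ coincide except for the single new pair: the letter $x_0$ now sends $a\mapsto h(b)$ and the letter $x_0^{-1}$ now sends $h(b)\mapsto a$, whereas under $h\circ g$ both were undefined (producing $c$). I will call this pair the \emph{new edge}, and note that at $a$ the only forward exit, and at $h(b)$ the only backward exit, is the new edge.

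Next I would isolate the finite obstruction. Put $M=\max_{0\le p\le s}L'(W_p)$, and let $A^{*}$ be the finite set of all points $\Phi(a)$, where $\Phi$ ranges over products (including the empty one) of at most $M$ of the block--maps $V_i(h\circ g,f_1,\dots,f_n)$ occurring in the decompositions $(\dagger\dagger)$ of the $W_p$, taken where defined. I would then choose $b\in B$ so that $h(b)$ avoids the finite set of all defined values $\Psi^{-1}\Phi(a)$, with $\Phi,\Psi$ ranging over such short old products; since $B$ is infinite and $h$ is a bijection this excludes only finitely many candidates. The effect is that the \emph{cloud} $\{\Psi(h(b))\}$ of short old images of $h(b)$ is disjoint from $A^{*}$; in particular $h(b)\notin A^{*}$ and $h(b)\neq a$. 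Because any itinerary of a $W_p$ has at most $M+1$ entries, every old sub-walk of at most $M$ block--maps issuing from $a$ stays inside $A^{*}$, and every such sub-walk issuing from $h(b)$ stays inside the cloud.

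With $b$ so chosen, suppose $\overline{t'}\in\It(W_p,h\circ g',f_1,\dots,f_n)$ has $t_i'=t_j'\in\Omega$ with $0\le i<j\le L'(W_p)$. If the new edge is not used at any of the transitions $V_i,\dots,V_{j-1}$, then on $[i,j]$ the maps $h\circ g$ and $h\circ g'$ agree, so the old itinerary $\It(W_p,h\circ g,f_1,\dots,f_n,i,t_i')$ coincides with $\overline{t'}$ there and already has $t_i=t_j\in\Omega$; take $i_0=i,\ j_0=j$. Otherwise the new edge is used on $[i,j]$, and the disjoint pair $A^{*}\sqcup\text{cloud}$ organises the walk into \emph{phases}: the only passage between $A^{*}$ (which contains $a$) and the cloud (which contains $h(b)$) is along the new edge, forward $a\mapsto h(b)$ out of $A^{*}$ or backward $h(b)\mapsto a$ out of the cloud, while every other (old) transition preserves the current phase. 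Since $t_i'=t_j'$ lie in the same phase, the forward and backward uses of the new edge on $[i,j]$ are equal in number, so there are at least two uses. Inspecting the first use at some $k_1$ and the next use at some $k_2$, these must have opposite orientation: reducedness of $W_p$ forbids an $x_0x_0^{-1}$ or $x_0^{-1}x_0$ juxtaposition (ruling out $k_2=k_1+1$), while phase-disjointness forbids two consecutive uses of the same orientation (a second forward use would require first returning to $a\in A^{*}$, impossible from the cloud without a backward use, and symmetrically). Hence the value at position $k_1+1$ equals the value at position $k_2$, this common value being $h(b)$ when the first use is forward and $a$ when it is backward; since no new edge is used on $[k_1+1,k_2]$, the old itinerary through that common value agrees with $\overline{t'}$ on this interval and yields $t_{k_1+1}=t_{k_2}\in\Omega$. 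Take $i_0=k_1+1<j_0=k_2$.

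The main obstacle is precisely this last case: a priori the new edge could be traversed several times inside the colliding block and manufacture a genuinely new coincidence. The device that defeats it is the generic choice of $h(b)$, which severs $A^{*}$ from the cloud and rigidifies the phase structure; once the phases cannot interleave except across the single new edge, any return to the starting phase must revisit one of the two gluing points $a$ or $h(b)$, and revisiting a gluing point is already a collision recorded by the old data. I expect the only delicate bookkeeping to be checking, in each orientation, that the relevant old itinerary does not terminate in $c$ before its second occurrence -- which holds because on the intervening phase the old and new transitions literally coincide and $\overline{t'}$ is $\Omega$-valued there.
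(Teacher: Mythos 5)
Your proof is correct and follows essentially the same route as the paper's: you choose $b$ generically so that the finitely many old partial-itinerary values reachable from $a$ and from $h(b)$ are disjoint, and then show that any coincidence in a new itinerary either avoids the new pair (yielding an old coincidence at the same positions) or crosses it at least twice, with two consecutive crossings necessarily of opposite orientation and non-adjacent by reducedness, forcing a return to $a$ or $h(b)$ that the old itinerary already records. The only cosmetic differences are that the paper indexes its forbidden sets by the positions of $x_0^{\pm 1}$ in each $W_p$ rather than by a single orbit set $A^{*}$ (note that your $\Phi,\Psi$ must be allowed to include inverses of the block maps, since itineraries run in both directions, though this does not affect finiteness), and that it replaces your phase/parity argument with an explicit case analysis tracking the auxiliary itineraries $\overline{t''}$ and $\overline{t^{(3)}}$.
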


\begin{proof}  Assume the hypotheses (1) - (4).  For $0 \leq p \leq s$ we let $$W_p \equiv V_{p, L'(W_p) - 1} V_{p, L'(W_p) - 2} \cdots V_{p, 0}$$ be the $(\dagger\dagger)$ decomposition.  For each $0 \leq p \leq s$ we let $X_p^+ = \{0 \leq i \leq L'(W) - 1: V_i \equiv x_0\}$ and $X_p^- = \{0 \leq i \leq L'(W) - 1: V_i \equiv x_0^{-1}\}$.  We shall first prove conclusion (i).  Let $a \in \Omega \setminus \dom(g)$ be given.  For each $i \in X_p^+$ we define $Z_{p, i}$ to be the set of elements $b \in \Omega$ such that $$\set(\It(W_p, h \circ g, f_1, \ldots, f_n, i + 1, h(b))) \cap \set(\It(W_p, h \circ g, f_1, \ldots, f_n, i, a)) = \emptyset.$$  It is straightforward to see that $\Omega \setminus Z_{p, i}$ has cardinality at most $(i + 1)(L'(W_p) + 1)$.  For $i \in X_p^-$ define $Z_{p, i}$ to be the set of those $b \in \Omega$ such that $$\set(\It(W_p, h \circ g, f_1, \ldots, f_n, i + 1, a)) \cap \set(\It(W_p, h \circ g, f_1, \ldots, f_n, i, h(b))) = \emptyset.$$  By similar reasoning we can see that $\Omega \setminus Z_{p, i}$ is finite.

Defining $Z \subseteq \Omega$ by $Z = (\bigcap_{0 \leq p \leq s}\bigcap_{ i \in X_p^+ \cup X_p^-} Z_{p, i}) \setminus \{h^{-1}(a)\}$ we have that $\Omega \setminus Z$ is finite.  So, we may select $b \in Z$ such that $g' = g \cup \{(a, b)\} \in \mathcal{F}$.  We check the conclusion of (i).  Let $0 \leq p \leq s$ and suppose that $\overline{t'} \in \It(W_p, h \circ g', f_1, \ldots, f_n)$ is such that $t_i' = t_j' \in \Omega$ for some $0 \leq i < j \leq L'(W_p)$.  Let $\overline{t''} = \It(W_p, h \circ g, f_1, \ldots, f_n, i, t_i')$.  If $t_j' = t_j''$, then we may conclude by letting $j_0 = j$ and $i_0 = i$.  Otherwise, there exists some $j > k \geq i$ such that 

\begin{itemize}

\item $t_i' = t_i'', \ldots, t_k' = t_k''$;

\item $V_{p, k} \equiv x_0$ or $V_{p, k} \equiv x_0^{-1}$; and

\item $V_{p, k}(h \circ g)t_k''$ is undefined.

\end{itemize}

\noindent So either $t_k' = a$ and $t_{k + 1}' = h(b)$ (in case $V_{p, k} \equiv x_0$), or $t_k' = h(b)$ and $t_{k + 1}' = a$ (in case $V_{p, k} \equiv x_0^{-1}$).  Let $\overline{t^{(3)}} = \It(W_p, h \circ g, f_1, \ldots, f_n, k + 1, t_{k + 1}')$.  If $t_j^{(3)} = t_j'$ then $$t_j' \in \set(\It(W_p, h \circ g, f_1, \ldots, f_n, i + 1, h(b))) \cap \set(\It(W_p, h \circ g, f_1, \ldots, f_n, i, a))$$ in case $V_{p, k} \equiv x_0$ and $$t_j' \in \set(\It(W_p, h \circ g, f_1, \ldots, f_n, i + 1, a)) \cap \set(\It(W_p, h \circ g, f_1, \ldots, f_n, i, h(b)))$$ in case $V_{p, k} \equiv x_0^{-1}$.  Either way we have contradicted our choice of $b \in Z$.  Thus it is the case that $t_j^{(3)} = c$, and taking $m \geq k + 1$ maximal such that $t_{m}^{(3)} \neq c$ we have

\begin{itemize}

\item $t_{k + 1}' = t_{k + 1}^{(3)}, \ldots, t_m' = t_m^{(3)}$;

\item $V_{p, m} \equiv x_0$ or $V_{p, m} \equiv x_0^{-1}$; and

\item $V_{p, m}(h \circ g)t_m^{(3)}$ is undefined.

\end{itemize}

In particular $j > k + 1$ since $t_j^{(3)} = c$.  We claim that $m \neq k + 1$.  For contradiction we assume $m = k + 1$, so that $V_{p, k + 1} \equiv V_{p, k}$ (since the word $W_p$ is reduced).  If $V_{p, k} \equiv x_0$ then since $V_{p, k + 1}(h \circ g)t_{k + 1}^{(3)}$ is not defined and $V_{p, k + 1}(h \circ g')t_{k + 1}^{(3)}$ is defined, we see that $t_{k + 1}^{(3)} = a$.  By the same reasoning we have $t_{k}'' = a$, and so $t_{k + 1}^{(3)} = h(b)$ and $h(b) = a$, contradicting our choice of $b$.  On the other hand, if $V_{p, k} \equiv x_0^{-1}$ then $t_{k + 1}^{(3)} = h(b)$ (since $V_{p, m}(h \circ g)$ is undefined at $t_{k + 1}^{(3)}$ but $V_{p, m}(h \circ g')$ is defined at $t_{k + 1}^{(3)}$).  By the same reasoning $t_k'' = h(b)$ and therefore $t_{k + 1}^{(3)} = a = h(b)$ contradicting the choice of $b \in Z$.  Thus indeed $m > k + 1$.

If $t_m^{(3)} = t_{k + 1}^{(3)}$ then we may conclude the argument by letting $j_0 = m$ and $i_0 = k + 1$.  Assuming the inequality $t_m^{(3)} \neq t_{k + 1}^{(3)}$, we get that $V_{p, m} \equiv V_{p, k}$.  If $V_{p, k} \equiv x_0$ then $k \in X_p^+$ and $t_k'' = a = t_m^{(3)}$ and we get $$a \in \set(\It(W_p, h \circ g, f_1, \ldots, f_n, k + 1, h(b))) \cap \set(\It(W_p, h \circ g, f_1, \ldots, f_n, k, a))$$ which is contrary to having $b \in Z$.  If $V_{p, k} \equiv x_0^{-1}$ then $k \in X_p^-$ and $$h(b) \in \set(\It(W_p, h \circ g, f_1, \ldots, f_n, k + 1, a)) \cap \set(\It(W_p, h \circ g, f_1, \ldots, f_n, k, h(b)))$$ which also contradicts the choice of $b \in Z$.  Thus we have checked that (i) holds.

The proof of conclusion (ii) is entirely analogous and we give the sketch.  We have $X_p^+$ and $X_p^-$ defined as before, for each $0 \leq p \leq s$.  Let $b \in \Omega \setminus \ran(g)$ be given.  For each $i \in X_p^+$ define $Y_{p, i}$ to be the set of elements $a \in \Omega$ such that $$\set(\It(W_p, h \circ g, f_1, \ldots, f_n, i + 1, h(b))) \cap \set(\It(W_p, h \circ g, f_1, \ldots, f_n, i, a)) = \emptyset.$$  For $i \in X_p^-$ we let $Y_{p, i}$ be the set of $a \in \Omega$ such that $$\set(\It(W_p, h \circ g, f_1, \ldots, f_n, i + 1, a)) \cap \set(\It(W_p, h \circ g, f_1, \ldots, f_n, i, h(b))) = \emptyset.$$  Let $Y = (\bigcap_{0 \leq p \leq s}\bigcap_{ i \in X_p^+ \cup X_p^-} Y_{p, i}) \setminus \{h(b)\}$.  It is easy to see that $\Omega \setminus Y$ is finite, so select $a \in Y$ such that $g' := g \cup \{(a, b)\} \in \mathcal{F}$.  Now follow the proof as in (i), with the obvious slight modifications, to check (ii). 

\end{proof}

\begin{definition}\label{stronglyindependent}  We say a subset $\mathcal{G} \subseteq \Sym(\Omega)$ is \emph{strongly independent} if for every nontrivial $W(x_0, \ldots, x_{\ell}) \in F$ and pairwise distinct elements $f_0, \ldots, f_{\ell} \in \mathcal{G}$, the element $W(f_0, \ldots, f_{\ell}) \in \Sym(\Omega)$ has only finitely many fixed points in $\Omega$.  We will use $\fix(f)$ to denote the set of fixed points of $f \in \Sym(\Omega)$.
\end{definition}

\begin{lemma}\label{onlycountablestronglyind}  There exists a countably infinite, strongly independent set $\mathcal{J}$ which is dense in $\Sym(\Omega)$.
\end{lemma}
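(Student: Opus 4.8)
The plan is to construct $\mathcal{J} = \{g_n : n < \omega\}$, with each $g_n \in \Sym(\Omega)$, as the union of an increasing chain of finite partial bijections built in $\omega$ stages. Three features must be secured: density, that each $g_n$ is a genuine bijection, and strong independence. Density reduces to the (countable) list of tasks ``$p \subseteq g_{n(p)}$'' as $p$ ranges over the finite partial bijections of $\Omega$, where we reserve a \emph{fresh} index $n(p)$ for each $p$. Totality and surjectivity reduce to the standard tasks ``$a \in \dom(g_n)$'' and ``$a \in \ran(g_n)$'' for each $a \in \Omega$. Strong independence reduces to the countable list of \emph{constraints} $(W, n_0, \dots, n_\ell)$, where $W = W(x_0, \dots, x_\ell)$ is a nontrivial reduced word using each of its listed variables (and, after relabelling, using $x_0$) and $n_0, \dots, n_\ell$ are distinct indices; such a constraint asks that $W(g_{n_0}, \dots, g_{n_\ell})$ have only finitely many fixed points. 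I would interleave all of these into a single $\omega$-sequence of stages, arranging that the fresh index $n(p)$ is introduced and $p$ installed \emph{before} any constraint involving $n(p)$ is \emph{activated}.

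\emph{The generic extension step.} Let $\mathbb{M}_0 = (\Omega, \emptyset)$ and let $\mathcal{F}_0$ be the collection of all finite partial injections of $\Omega$; then $(\mathbb{M}_0, \mathcal{F}_0)$ is a flexible pair. The key move is that whenever, at some stage, we extend an \emph{involved} generator $g_{n_i}$ (one occurring in a currently active constraint) by a single new pair, we choose the new value so as not to create new itinerary coincidences for any active constraint. Concretely, for each active constraint whose word uses the variable filled by $g_{n_i}$, relabel so that $g_{n_i}$ plays the role of $x_0$ and apply Lemma \ref{essentiallem} with $h$ the identity, $g$ the current finite state of $g_{n_i}$, and the other involved generators in their current finite states as $f_1, \dots, f_n$ (the proof of Lemma \ref{essentiallem} never uses that $f_1, \dots, f_n$ are total, so its analogue for partial bijections is available). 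Part (i) governs a task ``$a \in \dom(g_{n_i})$'' and part (ii) a task ``$a \in \ran(g_{n_i})$''; each yields a cofinite set of admissible values, and since only finitely many constraints are active I intersect these finitely many cofinite sets and select an admissible value from the nonempty intersection. Extensions of generators not involved in any active constraint, and the installation of each $p$ before its index is activated, are performed freely, as they cannot affect the word of an already active constraint.

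\emph{Finiteness of fixed points.} Fix a constraint $r = (W, n_0, \dots, n_\ell)$ activated at stage $s_r$, write $\bar{g} = (g_{n_0}, \dots, g_{n_\ell})$ for the final total tuple and $\bar{g}^{(s)}$ for its stage-$s$ approximation. By construction every post-$s_r$ extension affecting $W$ is a Lemma \ref{essentiallem} extension, so no such step creates a new coincidence: if after a one-pair extension an itinerary of $W$ has $t_i = t_j \in \Omega$ with $i < j$, then an itinerary of $W$ before the extension already had a coincidence, located among the elements lying on the same run. Let $E \subseteq \Omega$ be the finite set of elements lying on a stage-$s_r$ itinerary of $W$ (with respect to $\bar{g}^{(s_r)}$) that contains a coincidence. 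If $a$ is a fixed point of $W(\bar{g})$, then $\It(W, \bar{g}, 0, a)$ is totally defined and satisfies $t_0 = t_{L'(W)} = a$, hence contains a coincidence; tracing backward through the construction the last pair that closed this run and applying the no-new-coincidence property repeatedly, one finds a coincidence among $\set(\It(W, \bar{g}, 0, a))$ already at stage $s_r$, so $\set(\It(W, \bar{g}, 0, a)) \cap E \neq \emptyset$. By Lemma \ref{itineraryisdeterminedbycoord} a run of $W$ is determined by any one of its values together with the position of that value, so at most $(|E|)(L'(W) + 1)$ distinct runs meet $E$; thus $W(\bar{g})$ has at most this many fixed points. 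As $r$ was arbitrary, $\mathcal{J}$ is strongly independent, and it is dense and countably infinite by construction.

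\emph{Main obstacle.} I expect the crux to be the finiteness argument of the previous paragraph: verifying that the purely local ``no new coincidence'' guarantee of Lemma \ref{essentiallem}, iterated over the infinitely many extension steps beyond stage $s_r$, confines every final fixed point's run to meet the finite set $E$, together with the bookkeeping ensuring that the density and totality tasks can always be realized by admissible (coincidence-avoiding) values for the finitely many simultaneously active constraints.
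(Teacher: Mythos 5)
Your overall architecture differs from the paper's: you build all of $\mathcal{J}$ simultaneously as an interleaved $\omega$-construction, whereas the paper adds one permutation at a time, showing that any finite strongly independent set $\mathcal{J}_n=\{f_1,\dots,f_n\}$ of \emph{total} bijections can be enlarged by a single $\overline{q}$ extending a prescribed finite partial bijection, and then iterating over an enumeration of $\mathcal{F}$. This difference is not cosmetic; it is where your argument breaks. Lemma \ref{essentiallem} controls itinerary coincidences only with respect to the $(\dagger\dagger)$ decomposition of $W$ relative to the one variable whose interpretation is being extended (the $x_0$ slot after your relabelling). When you extend $g_{n_m}$ for $m\geq 1$ and invoke the lemma on the relabelled word, the ``no new coincidence'' guarantee you obtain concerns $x_m$-itineraries of $W$, not the $x_0$-itineraries your final count is phrased in. These are genuinely different objects: the cut points of the two decompositions do not coincide, and a coincidence in one need not be a coincidence in the other. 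Concretely, take $W\equiv x_0x_2x_1x_3$ and extend $g_{n_1}$ by a pair $(a,b)$ with $b=g_{n_2}^{-1}g_{n_3}^{-1}a$; this creates a new fixed point of the block $U_0(\bar g)=g_{n_2}g_{n_1}g_{n_3}$, hence a new coincidence $t_0=t_1$ in the $x_0$-itinerary of $W$, yet one checks directly that the $Z$-sets of Lemma \ref{essentiallem} applied to the relabelled word do not exclude this $b$, because no $x_1$-itinerary of $W$ acquires a coincidence. So your backward-tracing argument, which must pass the invariant ``every coincidence traces back'' through \emph{every} post-$s_r$ step in a \emph{single fixed} decomposition, fails at the steps that extend the non-distinguished generators; the chain of ``old coincident itineraries'' you produce hops between decompositions and never lands in your set $E$.

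The paper avoids this entirely: when $\overline{q}$ is being built, the background $f_1,\dots,f_n$ are already total and already strongly independent, so the blocks $V_k(f_1,\dots,f_n)$ are fixed permutations with finitely many fixed points by the inductive hypothesis, and the only thing that ever changes is the $x_0$ slot --- exactly what Lemma \ref{essentiallem} governs. The final count then splits coincident stage-$p$ itineraries into those crossing an $x_0^{\pm 1}$ step (anchored in the finite set $\dom(u_p)$) and those internal to a block (bounded by $\sum_k|\fix V_k(f_1,\dots,f_n)|$), and $(*)_3$ transports every final coincidence back to stage $p$. Neither ingredient survives in your simultaneous construction: the blocks are themselves moving targets, and their fixed-point sets are only controlled by \emph{other} constraints in your list, which would have to be handled by an induction (on word length or number of variables) that you have not set up, and which in any case would not restore the single-decomposition trace-back your counting needs. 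To repair the proof, either revert to the paper's one-at-a-time scheme, or prove a strengthened version of Lemma \ref{essentiallem} that controls coincidences of the ``full'' letter-by-letter itinerary under extension of an arbitrary one of the interpreting partial bijections; the lemma as stated does not do this.
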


\begin{proof}
Let $\mathbb{M} = (\Omega, \emptyset)$ be the relational model having no relations.  Then $\Aut(\mathbb{M}) = \Sym(\Omega)$ and letting $\mathcal{F}$ be the set of all partial bijections on $\Omega$, the ordered pair $(\mathbb{M}, \mathcal{F})$ is clearly a flexible pair.  Let $\mathcal{J}_n = \{f_1, \ldots, f_n\}$ be a (possibly empty) finite strongly independent set with $f_1, \ldots, f_n$ pairwise distinct.  We show that if $q \in \mathcal{F}$ then there exists $\overline{q} \in \Sym(\Omega)\setminus \mathcal{J}_n$ such that $\overline{q} \upharpoonright \dom(q) = q$ and $\mathcal{J}_n \cup \{\overline{q}\}$ strongly independent.  This immediately implies the lemma since we can then enumerate $\mathcal{F} = \{q_{\ell}\}_{\ell \in \omega \setminus \{0\}}$ and inductively define strongly independent sets $\mathcal{J}_n = \{\overline{q_n}\}_{1 \leq \ell \leq n}$ so that the $\overline{q_1}, \ldots, \overline{q_n}$ are pairwise distinct and $\overline{q_{\ell}} \upharpoonright \dom(q_{\ell}) = q_{\ell}$, so $\mathcal{J} = \bigcup_{n \in \omega \setminus \{0\}} \mathcal{J}_n$ is as required.

Let $q \in \mathcal{F}$ be given.  Let $\{W_p(x_0, \ldots, x_n)\}_{p < \omega}$ be an enumeration of the words which use $x_0$ and which do not use $x_i$ for $i > n$.  Let $\Omega = \{a_t\}_{t < \omega}$ be an enumeration.  By iterated use of Lemma \ref{essentiallem} define a sequence $u_s$, $s < \omega$, of elements of $\mathcal{F}$ such that

\begin{enumerate}

\item[$(*)_1$] $q = u_0 \subseteq u_1 \subseteq \cdots \subseteq u_r\subseteq \cdots$;

\item[$(*)_2$] $a_s \in \dom(u_{s + 1}) \cap \ran(u_{s + 1})$;

\item[$(*)_3$] if $p < s$ and $\overline{t'} \in \It(W_p, u_s, f_1, \ldots, f_n)$ is an itinerary for which $t_j' = t_i' \in \Omega$ for some $0 \leq i < j \leq L'(W_p)$ then there exist $0 \leq i_0 < j_0 \leq L'(W)$ such that $\overline{t} = \It(W_p, u_p, f_1, \ldots, f_n, i_0, t_{i_0}')$ satisfies $t_{i_0} = t_{j_0} \in \Omega$.

\end{enumerate}

More precisely, set $u_0 = q$.  Assuming $u_s$ is defined, let $u_s' = u_s$ in case $a_s \in \dom(u_s)$.  Otherwise use Lemma \ref{essentiallem} (i), with $h \in \Sym(\Omega)$ being the identity map, to find $b \in \Omega$ with $u_s' = u_s \cup \{(a_s, b)\} \in \mathcal{F}$ such that if 

\begin{enumerate}

\item $0 \leq p \leq s$;

\item $\overline{t'} \in \It(W_p, u_s', f_1, \ldots, f_n)$; and 

\item $0 \leq i < j \leq L'(W)$ are such that $t_i' = t_j' \in \Omega$

\end{enumerate}

\noindent then there are $0 \leq i_0 < j_0 \leq L'(W)$ such that $\overline{t} = \It(W_p, u_s, f_1, \ldots, f_n, i_0, t_{i_0}')$ satisfies $t_{i_0} = t_{j_0} \in \Omega$.  Next, define $u_s'' = u_s'$ in case $a_s \in  \ran(u_s')$.  Otherwise use Lemma \ref{essentiallem} (ii) to find $a \in \Omega$ such that $u_s'' = u_s' \cup \{(a, a_s)\} \in \mathcal{F}$ and satisfying the comparable conditions.  Set $u_{s + 1} = u_s''$.  Now conditions $(*)_1$ and $(*)_2$ are clear, and condition $(*)_3$ holds for a fixed $p$ by induction on $s \geq p$.  

Write $\overline{q} = \bigcup_{s < \omega} u_s$.  By $(*)_1$ and $(*)_2$ we know $\overline{q} \in \Sym(\Omega)$.   Notice that $\overline{q} \neq f_k$ for each $1 \leq k \leq n$, since the word $W \equiv x_0x_k^{-1}$ is such that $W(\overline{q}, f_1, \ldots, f_n) = \overline{q}f_k^{-1}$ has only finitely many fixed points (at most $|\dom(u_p)| + |\fix(f_k)|$ where $W \equiv W_p$).

Note that in checking strong independence of $\mathcal{J}_n \cup \{\overline{q}\}$ it suffices to prove that if $W(x_0, \ldots, x_n)$ uses $x_0$ then $W(\overline{q}, f_1, \ldots, f_n)$ has finitely many fixed points (since $\mathcal{J}_n$ is already strongly independent, and by permuting the variables in a free word so that $\overline{q}$ is substituted for $x_0$).  Let $p < \omega$.  Write $$W_p \equiv V_{L'(W_p) - 1} \cdots V_0$$ for the $(\dagger\dagger)$ decomposition of $W_p$.  By construction, if $\overline{t} \in \It(W_p, u_p, f_1, \ldots, f_n)$ and $0 \leq i_0 < j_0 \leq L'(W_p)$ are such that $t_{i_0} = t_{i_1} \in \Omega$ then there is some $0 \leq k \leq L'(W_p) - 1$ for which

\begin{itemize}

\item $V_k \equiv x_0$ or $V_k \equiv x_0^{-1}$, and $t_k \in \dom(V_k(u_p))$; or

\item $V_k \not\equiv x_0, x_0^{-1}$ and $V_k(f_1, \ldots, f_n)t_k = t_{k + 1} = t_k$.

\end{itemize}

Suppose that for $\overline{t''} \in \It(W_p, \overline{q}, f_1, \ldots, f_n)$ there exist some $0 \leq i < j \leq L'(W)$ for which $t_i'' = t_j''$.  By $(*)_3$ we select $0 \leq i_0 < j_0 \leq L'(W)$ for which $$\overline{t} = \It(W_p, u_p, f_1, \ldots, f_n, i_0, t_{i_0}'')$$ satisfies $t_{i_0} = t_{j_0} \in \Omega$.  Then we can pick a $k$ for which $V_k \equiv x_0$ or $V_k \equiv x_0^{-1}$, and $t_k \in \dom(V_k(u_p))$; or such that $t_{k + 1} = t_k$ and $V_k \not\equiv x_0, x_0^{-1}$.  Therefore the number of points of $\Omega$ which are fixed by $W_p(\overline{q}, f_1, \ldots, f_n)$ is given by the bound 

$$
\begin{array}{ll}
|\fix(W_p(\overline{q}, f_1, \ldots, f_n))| & \leq  |\{\{0 \leq i \leq L'(W) - 1: V_i \equiv x_0 \vee V_i \equiv x_0^{-1}\}|\cdot |\dom(u_p)|\vspace*{2mm}\\
& +  \sum_{\{0 \leq i \leq L'(W)- 1: V_i \not\equiv x_0, x_0^{-1}\}} |\fix V_i(f_1, \ldots, f_n)|\vspace*{2mm}

\end{array}
$$ since the number on the right bounds the number of itineraries which are not injective into $\Omega$.  Therefore the set of fixed points is finite.

So the set $\mathcal{J}_n \cup \{\overline{q}\}$ is strongly independent and of cardinality $n + 1$.

\end{proof}

\end{section}

\begin{section}{Proof of Theorem \ref{main} and applications}\label{mainconsequences}

We recall some set-theoretic conventions.  Every ordinal is the set of ordinals which are strictly below itself, so $4 = \{0, 1, 2, 3\}$ and $\omega + 2 = \{0, 1, 2, \ldots, \omega, \omega + 1\}$.  Each cardinal is an ordinal (which is the least ordinal having that cardinality).  We use $2^{\aleph_0}$ to denote the cardinality of the continuum.

\begin{lemma}\label{immediatetotheorem} Suppose that for any strongly independent set $\mathcal{G} \subseteq \Sym(\Omega)$, with $|\mathcal{G}| < 2^{\aleph_0}$, and isomorphic inductively flexible structures $\mathbb{M}_1 = (\Omega, \{R_i\}_{i \in I})$ and $\mathbb{M}_2 = (\Omega, \{R_i'\}_{i \in I})$ there exists an isomorphism $f: \mathbb{M}_1 \rightarrow \mathbb{M}_2$ such that $\mathcal{G} \cup \{f\}$ is strongly independent.  Then there exists a subgroup $G \leq \Sym(\Omega)$ which is

\begin{enumerate}

\item proper;

\item dense; and

\item for any pair of isomorphic inductively flexible structures $\mathbb{M}$ and $\mathbb{M}'$ having universe $\Omega$ there exists $f \in G$ which is an isomorphism of $\mathbb{M}$ and $\mathbb{M}'$.

\end{enumerate}

\end{lemma}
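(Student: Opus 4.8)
The plan is to take $G=\langle\mathcal{G}\rangle$ for a strongly independent set $\mathcal{G}$ built by transfinite recursion. I will start the recursion from the countably infinite, dense, strongly independent set $\mathcal{J}$ of Lemma~\ref{onlycountablestronglyind}. Since $\mathcal{J}\subseteq\mathcal{G}\subseteq G$ and $\mathcal{J}$ is already dense, density (2) is immediate and requires no further attention; all of the work goes into (3), while properness (1) will fall out of strong independence at the very end.

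The crucial preliminary is to bound the number of ``transitivity tasks'' by $2^{\aleph_0}$. I first note that $G$ acts transitively on $\overline{\mathbb{M}}$ if and only if $G\cdot\Aut(\mathbb{M})=\Sym(\Omega)$, that is, $G$ meets every left coset of $\Aut(\mathbb{M})$; hence transitivity on $\overline{\mathbb{M}}$ depends on $\mathbb{M}$ only through the closed subgroup $\Aut(\mathbb{M})\leq\Sym(\Omega)$. Because $\Sym(\Omega)$ is second countable it has at most $2^{\aleph_0}$ closed subsets, so at most $2^{\aleph_0}$ closed subgroups. I enumerate those closed subgroups occurring as $\Aut(\mathbb{M})$ for some inductively flexible $\mathbb{M}$, fixing one witness $\mathbb{M}_\alpha$ for each, and observe that the left cosets of $\Aut(\mathbb{M}_\alpha)$ biject with $\overline{\mathbb{M}_\alpha}$ and so number at most $2^{\aleph_0}$, while each $g\cdot\mathbb{M}_\alpha$ is again inductively flexible (conjugate a witnessing $\mathcal{F}$ by $g$). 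This yields a family of at most $2^{\aleph_0}\cdot2^{\aleph_0}=2^{\aleph_0}$ tasks, each of the form ``insert into $G$ an isomorphism $\mathbb{M}_\alpha\to g\cdot\mathbb{M}_\alpha$''; I list them as $\langle(\mathbb{M}_{\alpha(\beta)},\,g_\beta\cdot\mathbb{M}_{\alpha(\beta)}):\beta<\mu\rangle$ with $\mu\leq2^{\aleph_0}$.

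Now I run the recursion. Put $\mathcal{G}_0=\mathcal{J}$ and take unions at limits; a union of an increasing chain of strongly independent sets is strongly independent, since any reduced word involves only finitely many generators, all of which appear at a common earlier stage. At a successor $\beta+1$, the set $\mathcal{G}_\beta$ is strongly independent with $|\mathcal{G}_\beta|\leq\aleph_0+|\beta|<2^{\aleph_0}$, so I may apply the hypothesis to $\mathcal{G}_\beta$ and the isomorphic pair $(\mathbb{M}_{\alpha(\beta)},g_\beta\cdot\mathbb{M}_{\alpha(\beta)})$ to obtain an isomorphism $f_\beta$ between them with $\mathcal{G}_{\beta+1}:=\mathcal{G}_\beta\cup\{f_\beta\}$ strongly independent; here $f_\beta\in g_\beta\Aut(\mathbb{M}_{\alpha(\beta)})$. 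Setting $\mathcal{G}=\bigcup_{\beta<\mu}\mathcal{G}_\beta$ and $G=\langle\mathcal{G}\rangle$, I verify (3): given isomorphic inductively flexible $\mathbb{M},\mathbb{M}'$ on $\Omega$, we have $\Aut(\mathbb{M})=\Aut(\mathbb{M}_\alpha)$ for some $\alpha$ on the list, and every coset of $\Aut(\mathbb{M}_\alpha)$ contains some $f_\beta\in G$, whence $G\cdot\Aut(\mathbb{M})=\Sym(\Omega)$ and $G$ acts transitively on $\overline{\mathbb{M}}\ni\mathbb{M}'$, producing the desired isomorphism $f\in G$.

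Properness (1) I will get from strong independence rather than from cardinality. Every non-identity element of $G$ is a nontrivial reduced word in finitely many distinct members of $\mathcal{G}$, hence has only finitely many fixed points by strong independence; a transposition $(a\,b)$ is a non-identity permutation with cofinitely many fixed points and therefore cannot belong to $G$, so $G<\Sym(\Omega)$. The step I expect to be the main obstacle is the counting in the second paragraph: the naive number of inductively flexible structures on $\Omega$ is $2^{2^{\aleph_0}}$, and only the reduction of condition (3) to the at most $2^{\aleph_0}$ closed subgroups $\Aut(\mathbb{M})$ keeps the recursion of length $\leq 2^{\aleph_0}$, which is exactly what is needed to guarantee $|\mathcal{G}_\beta|<2^{\aleph_0}$ at every stage so that the hypothesis may be invoked.
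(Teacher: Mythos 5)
Your proof is correct, and its skeleton is the same as the paper's: start from the countable dense strongly independent set $\mathcal{J}$ of Lemma \ref{onlycountablestronglyind}, run a transfinite recursion of length at most $2^{\aleph_0}$ adjoining one isomorphism per ``task'' via the hypothesis (taking unions at limits, where strong independence is preserved because it is a condition on finite subsets), set $G=\langle\mathcal{G}\rangle$, and get properness from the fact that every nontrivial reduced word in distinct members of a strongly independent set has only finitely many fixed points, so a permutation with infinitely many fixed points (your transposition, the paper's shift-on-evens) lies outside $G$. Where you genuinely diverge is the organization of the task list. The paper simply writes down ``an enumeration $\{(\mathbb{M}_{1,\alpha},\mathbb{M}_{2,\alpha})\}_{\alpha<2^{\aleph_0}}$ of all pairs of isomorphic inductively flexible structures with universe $\Omega$,'' which taken literally is problematic: there are $2^{2^{\aleph_0}}$ sets of relations on $\Omega$ (and a proper class of indexed structures), so no such enumeration exists without first quotienting. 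Your reduction --- condition (3) for the pair $(\mathbb{M},\mathbb{M}')$ depends only on the closed set $\operatorname{Iso}(\mathbb{M},\mathbb{M}')$, a left coset of the closed subgroup $\Aut(\mathbb{M})$, and a second countable space has at most $2^{\aleph_0}$ closed subsets --- supplies exactly the missing justification, at the cost of a short extra argument (including the observation that $g\cdot\mathbb{M}_\alpha$ is again inductively flexible, which you correctly note). So your route is slightly longer but makes the cardinality bookkeeping, which is the one point the paper elides, completely airtight; everything else matches the paper's proof.
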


\begin{proof}
Assume the hypotheses.  Let $\{(\mathbb{M}_{1, \alpha}, \mathbb{M}_{2, \alpha})\}_{\alpha < 2^{\aleph_0}}$ be an enumeration of all pairs of isomorphic inductively flexible structures which both have universe $\Omega$.  By Lemma \ref{onlycountablestronglyind} let $\mathcal{J}$ be a countably infinite strongly independent set which is dense in  $\Sym(\Omega)$.

By the assumptions of the lemma pick $f_0 \in \Sym(\Omega)$ which is an isomorphism from $\mathbb{M}_{1, 0}$ to $\mathbb{M}_{2, 0}$ such that $\mathcal{J} \cup \{f_0\}$ is strongly independent.  Assuming we have chosen $f_{\beta}$ for all $\beta < \alpha < 2^{\aleph_0}$, select $f_{\alpha} \in \Sym(\Omega)$ which is an isomorphism from $\mathbb{M}_{1, \alpha}$ to $\mathbb{M}_{2, \alpha}$ such that $\mathcal{J} \cup \{f_{\beta}\}_{\beta \leq \alpha}$ is strongly independent.  Let $G = \langle \mathcal{J} \cup \{f_{\alpha}\}_{\alpha < 2^{\aleph_0}} \rangle$.  By construction we know that (3) holds.  As $\mathcal{J} \subseteq G$ we know that (2) holds.  We also know that each nontrivial element of $G$ has finitely many fixed points (since $G$ is generated by a strongly independent set).  Then letting $\Omega = \{a_z\}_{z \in \mathbb{Z}}$ be an enumeration without repetition and 

\[
f(a_z) = \left\{
\begin{array}{ll}
a_{z + 2}
                                            & \text{if } z \in 2\cdot\mathbb{Z}, \\
a_z 
                                            & \text{otherwise}
\end{array}
\right.
\]

\noindent we have $f \in \Sym(\Omega) \setminus G$ and so (1) holds.

\end{proof}

We review some notions regarding partial orders (see \cite[Ch.14]{Halbeisen}).

\begin{definitions}  Let $\mathbb{P} = (P, \leq)$ be a partially ordered set.  A set $Q \subseteq P$ is 

\begin{itemize}

\item \emph{open} if $q \in Q$ and $q \leq p$ imply $p \in Q$;

\item \emph{dense} if for each $p \in P$ there exists $q \in Q$ with $p \leq q$;

\item \emph{centered} if every finite subset of $Q$ has an upper bound in $P$ (i.e. if $\{q_0, \ldots, q_k\} \subseteq Q$ is finite then there exists $p \in P$ with $q_j \leq p$ for each $0 \leq j \leq k$);

\item a \emph{filter} if 

\begin{enumerate}[(i)]

\item $Q \neq \emptyset$;

\item for any $q_0, q_1 \in Q$ there is $q \in Q$ with $q_0, q_1 \leq q$;

\item if $p \leq q \in Q$ then $p \in Q$.

\end{enumerate}

\end{itemize}

\noindent We say $\mathbb{P}$ is \emph{$\sigma$-centered} if $P$ is a countable union of centered sets.  The assertion MA($\sigma$-centered) is the following: 

\noindent Whenever $\mathbb{P} = (P, \leq)$ is a $\sigma$-centered partially ordered set, $P \neq \emptyset$, and $\mathcal{D}$ is a collection of open dense subsets of $P$ with $|\mathcal{D}| < 2^{\aleph_0}$ there exists a filter $Q \subseteq P$ with $Q \cap O \neq \emptyset$ for all $O \in \mathcal{D}$.

\end{definitions}

\begin{lemma}\label{inductionstep} (ZFC + MA($\sigma$-centered))  Suppose $\mathcal{G} \subseteq \Sym(\Omega)$ is strongly independent, with $|\mathcal{G}| < 2^{\aleph_0}$, and $\mathbb{M}_1 = (\Omega, \{R_i\}_{i \in I})$ and $\mathbb{M}_2 = (\Omega, \{R_i'\}_{i \in I})$ are isomorphic inductively flexible structures.  Then there exists an isomorphism $f: \mathbb{M}_1 \rightarrow \mathbb{M}_2$ such that $\mathcal{G} \cup \{f\}$ is strongly independent.
\end{lemma}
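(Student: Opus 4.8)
The plan is to use the hypothesis $\mathbb{M}_1 \cong \mathbb{M}_2$ to fix an isomorphism $\phi \colon \mathbb{M}_1 \to \mathbb{M}_2$ (so $\phi \in \Sym(\Omega)$) together with a family $\mathcal{F}$ witnessing the inductive flexibility of $\mathbb{M}_1$, and then to build a generic automorphism $g \in \Aut(\mathbb{M}_1)$ as the union of a filter supplied by MA($\sigma$-centered); the desired map will be $f = \phi \circ g \colon \mathbb{M}_1 \to \mathbb{M}_2$. This is arranged precisely so that Lemma \ref{essentiallem} applies with the fixed map $h = \phi$ and the flexible pair $(\mathbb{M}_1, \mathcal{F})$. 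I would take the poset $\mathbb{P}$ to consist of pairs $(g, S)$ where $g \in \mathcal{F}$ and $S$ is a finite set of pairs $(W, \overline{e})$ with $W(x_0, \ldots, x_n)$ a word using $x_0$ and $\overline{e} = (e_1, \ldots, e_n)$ a tuple of pairwise distinct elements of $\mathcal{G}$. I declare $(g, S) \le (g', S')$ iff $g \subseteq g'$, $S \subseteq S'$, and for every $(W, \overline{e}) \in S$ the extension \emph{seals} $W$ relative to $\overline{e}$: whenever $\overline{t'} \in \It(W, \phi \circ g', e_1, \ldots, e_n)$ has $t_i' = t_j' \in \Omega$ for some $0 \le i < j \le L'(W)$, there are $0 \le i_0 < j_0 \le L'(W)$ such that $\It(W, \phi \circ g, e_1, \ldots, e_n, i_0, t_{i_0}')$ has $t_{i_0} = t_{j_0} \in \Omega$. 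Transitivity of $\le$ follows by chaining this sealing clause, exactly as the induction establishing $(*)_3$ in Lemma \ref{onlycountablestronglyind}.

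The poset is $\sigma$-centered: for fixed $g$ the conditions $(g, S)$ have common upper bound $(g, S_1 \cup \cdots \cup S_k)$, since the sealing clause is vacuous when $g$ is unchanged, and $\mathcal{F}$ is countable, so $\mathbb{P}$ is a countable union of centered sets. The relevant open dense sets are, for each $a \in \Omega$, the set $\{(g, S) : a \in \dom(g) \cap \ran(g)\}$, and for each admissible pair $(W, \overline{e})$ the set $\{(g, S) : (W, \overline{e}) \in S\}$. The second family is dense for free, by enlarging $S$ while keeping $g$ fixed. The first family is the crux and is where Lemma \ref{essentiallem} is used: given $(g, S)$, I collect the finitely many elements of $\mathcal{G}$ occurring in $S$ as $e_1, \ldots, e_N$, rewrite every word of $S$ over the common variable list $x_0, x_1, \ldots, x_N$, and apply Lemma \ref{essentiallem}(i) with $h = \phi$ to adjoin $a$ to the domain by a one-point extension sealing all words of $S$ at once, then Lemma \ref{essentiallem}(ii) to adjoin $a$ to the range. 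There are $\aleph_0$ dense sets of the first kind and $\max(\aleph_0, |\mathcal{G}|) < 2^{\aleph_0}$ of the second, so MA($\sigma$-centered) yields a filter $Q$ meeting them all.

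I then set $g = \bigcup\{g' : (g', S') \in Q\}$. Directedness of $Q$ makes $g$ a single injective function, totality and surjectivity come from the first family, and $g \in \Aut(\mathbb{M}_1)$ because any finite tuple lies in the domain of a single condition's $g' \in \mathcal{F}$; hence $f = \phi \circ g$ is an isomorphism $\mathbb{M}_1 \to \mathbb{M}_2$ in $\Sym(\Omega)$. For strong independence of $\mathcal{G} \cup \{f\}$ it suffices, after relabelling so that $f$ is substituted for $x_0$ and invoking strong independence of $\mathcal{G}$ for words not involving $f$, to bound the fixed points of $W(f, e_1, \ldots, e_n)$ for each $W$ using $x_0$ and distinct $e_1, \ldots, e_n \in \mathcal{G}$. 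A fixed point yields an itinerary in $\It(W, \phi \circ g, e_1, \ldots, e_n)$ with a coincidence $t_0 = t_{L'(W)} \in \Omega$; since only finitely many values of $g$ are consulted, this coincidence already appears for $\phi \circ g'$ at some $(g', S') \in Q$ refining a condition $(g^*, S^*) \in Q$ with $(W, \overline{e}) \in S^*$. The sealing clause then produces a coincidence in $\It(W, \phi \circ g^*, e_1, \ldots, e_n)$, and the alternating structure of the $(\dagger\dagger)$ decomposition forces it through either an $x_0^{\pm 1}$ block with $t_k \in \dom(V_k(\phi \circ g^*))$ or a non-$x_0$ block $V_k$ fixing $t_k$. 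As in Lemma \ref{onlycountablestronglyind} this bounds the fixed points by $|\{x_0^{\pm 1}\text{ blocks}\}| \cdot |\dom(g^*)| + \sum_{V_k \not\equiv x_0^{\pm 1}} |\fix V_k(e_1, \ldots, e_n)|$, which is finite because $\dom(g^*)$ is finite and each $V_k(e_1, \ldots, e_n)$ is a nontrivial word in distinct members of the strongly independent set $\mathcal{G}$. Applying this to $W = x_0 x_1^{-1}$ also gives $f \neq e$ for every $e \in \mathcal{G}$. I expect the one genuine subtlety to be the transfer of the finite-stage sealing to the completed generic $g$, namely the observation that a single coincidence consults only boundedly many values of $g$ and is therefore already visible at a finite condition refining the sealing stage $(g^*, S^*)$.
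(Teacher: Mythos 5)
Your proposal is correct and follows essentially the same route as the paper: force with pairs (finite partial automorphism from $\mathcal{F}$, finite side condition), use Lemma \ref{essentiallem} with $h$ the fixed isomorphism to prove density of the domain/range sets, verify $\sigma$-centeredness by grouping on the first coordinate, and bound fixed points of $W(f, e_1, \ldots, e_n)$ by transferring a coincidence in an itinerary down to a finite stage that already seals $W$. The only difference is bookkeeping: the paper records the side condition as a finite subset $H_p \subseteq \mathcal{G}$ and imposes sealing for all words of length at most $|H_p|$ over $H_p$, whereas you record an explicit finite set of (word, tuple) pairs — an equivalent device.
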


\begin{proof}
Assume the hypotheses and let $h: \mathbb{M}_1 \rightarrow \mathbb{M}_2$ be an isomorphism.  Let $(\mathbb{M}_1, \mathcal{F})$ be a flexible pair.  For a reduced word $W$ in a free group $F(\{x_n\}_{n < \omega})$ let $L(W)$ denote the standard length of the word $W$ (for example $L(x_1x_3^{-2}x_1^{-1}) = 4$).  We define a partial order $\mathbb{P} = (P, \leq)$ as follows.  An element $p \in P$ has form $p = (g_p, H_p)$ where $g_p \in \mathcal{F}$ and $H_p$ is a finite subset of $\mathcal{G}$.  For the order $\leq$ on $P$, we write $p \leq p'$ if and only if

\begin{enumerate}[(a)]
\item $g_{p} \subseteq g_{p'}$;

\item $H_p \subseteq H_{p'}$; and

\item if $f_1, \ldots, f_n$ lists the elements of $H_p$ in a pairwise distinct fashion, $W(x_0, \ldots, x_n)$ is a reduced word which uses $x_0$, and if $L(W) \leq n$ then 

\begin{enumerate}[(A)]
\item $\overline{t'} \in \It(W_p, h \circ g_{p'}, f_1, \ldots, f_n)$; and

\item $t_i' = t_j' \in \Omega$ for some $0 \leq i < j \leq L'(W)$

\end{enumerate}

\noindent imply that there exist $0 \leq i_0 < j_0 \leq L'(W)$ such that $$\overline{t} = \It(W_p, h \circ g_p, f_1, \ldots, f_n, i_0, t_{i_0}')$$ has $t_{i_0} = t_{j_0} \in \Omega$.

\end{enumerate}

Notice first that $\mathbb{P}$ is $\sigma$-centered.  To see this, let $\mathbb{E} = \{(p, q): p, q \in P \wedge g_p = g_q\}$.  This is clearly an equivalence relation, having precisely $\aleph_0$ equivalence classes.  If $J$ is an equivalence class of $\mathbb{E}$, say $J = \{p: g_p = g\}$, and $\{(g_1, H_1), \ldots, (g_k, H_k)\} \subseteq Q$ is finite then $(g, \bigcup_{i = 1}^{k} H_i)$ is an upper bound for the set.

Notice as well that given $a \in \Omega$, the set $\mathbb{I}_a^1 = \{p \in P: a \in \dom(g_p)\}$ is open and dense.  That $\mathbb{I}_a^1$ is open is obvious, and that it is dense is immediate from Lemma \ref{essentiallem} (i).  That the set $\mathbb{I}_a^2 = \{p \in P: a \in \ran(g_p)\}$ is open and dense is clear using Lemma \ref{essentiallem} (ii).  For a function $f \in \mathcal{G}$ the set $\mathbb{I}_f = \{p \in P: f \in H_p\}$ is open and dense.  That it is open is clear.  To see that it is dense, let $p \in P$ be given and let $p' = (g_p, H_p \cup \{f\})$, so that indeed $p \leq p' \in \mathbb{I}_f$.

The collection  $\mathcal{D} = \{\mathbb{I}_a^1\}_{a \in \Omega} \cup \{\mathbb{I}_a^2\}_{a \in \Omega} \cup \{\mathbb{I}_f\}_{f \in \mathcal{G}}$ has cardinality less than $2^{\aleph_0}$, so by MA($\sigma$-centered) we select a filter $Q$ which intersects each element of $\mathcal{D}$ nontrivially.  Letting $g = \bigcup \{g_p: p \in Q\}$ it is clear that $g$ is an automorphism of $\mathbb{M}_1$, so the function $f = h \circ g$ is an isomorphism of $\mathbb{M}_1$ with $\mathbb{M}_2$.

It remains to check that $\mathcal{G} \cup \{f\}$ is strongly independent.  Let $\Omega = \{a_s\}_{s < \omega}$ be an enumeration.  Let $f_1 \in \mathcal{G}$ be given and consider the word $W(x_0, x_1) = x_0x_1^{-1}$.  Pick $p_0 \in Q$ such that $|H_{p_0}| \geq 2$ and $f_1 \in H_{p_0}$.  Let $p_1, p_2, \ldots$ be a sequence in $Q$ such that

\begin{itemize}
\item $p_0 \leq p_1 \leq p_2 \leq \cdots$; and

\item $a_s \in \dom(g_{p_s}) \cap \ran(g_{p_s})$;

\end{itemize}

\noindent  Then, reasoning as in the proof of Lemma \ref{onlycountablestronglyind}, the number of fixed points of $W(f, f_1)$ is at most $|\dom(g_{p_0})| + |\fix(f_1)|$ and in particular $f \neq f_1$.

Let $W(x_0, \ldots, x_n)$ be a nontrivial word, which without loss of generality uses $x_0$, and $f_1, \ldots, f_n$ be a list of pairwise distinct elements of $\mathcal{G}$.  Pick $p_0 \in Q$ such that $|H_{p_0}| \geq L(W)$ and $\{f_1, \ldots, f_n\} \subseteq H_{p_0}$.  Picking a sequence $p_0, p_1, \ldots$ in $Q$ with

\begin{itemize}
\item $p_0 \leq p_1 \leq p_2 \leq \cdots$; and

\item $a_s \in \dom(g_{p_s}) \cap \ran(g_{p_s})$;

\end{itemize}

\noindent we can argue as before that $\fix(W(f, f_1, \ldots, f_n))$ is finite.

\end{proof}

Theorem 1 now is immediate by combining Lemma \ref{immediatetotheorem} with Lemma \ref{inductionstep}.  We point out some consequences.  It is straightforward to check that if $<$ is a total linear order on $\Omega$ of order type $\mathbb{Q}$, then $\mathbb{M} = (\Omega, \{<\})$ is inductively flexible.

\begin{corollary}\label{Qtype} (ZFC + MA($\sigma$-centered))  There exists a subgroup $G < \Sym(\Omega)$ (e.g. $G$ from Theorem \ref{main}) which is a dense proper subgroup acting transitively on $\overline{\mathbb{M}} = \overline{(\Omega, \{<\})}$.
\end{corollary}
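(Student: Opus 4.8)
The plan is to reduce everything to Theorem~\ref{main}: once $\mathbb{M} = (\Omega, \{<\})$ is shown to be inductively flexible, the group $G$ produced by that theorem is automatically a proper dense subgroup acting transitively on $\overline{\mathbb{M}}$, which is exactly the assertion. So the only real content is exhibiting a witnessing collection $\mathcal{F}$.

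I would take $\mathcal{F}$ to be the set of all finite partial order-isomorphisms of $\mathbb{M}$, that is, the partial bijections $f$ with finite domain for which $x < y \iff f(x) < f(y)$ holds for all $x, y \in \dom(f)$. By the homogeneity of a countable dense order without endpoints, each such $f$ extends to a full automorphism of $\mathbb{M}$, so $\mathcal{F}$ is indeed a family of partial automorphisms. Conditions (1)--(3) of the definition are then immediate: finiteness of domains is built in, $\emptyset$ is vacuously order-preserving, and the inverse of an order-isomorphism is again one.

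The only step needing an argument is condition (4). Given $f \in \mathcal{F}$ with $\dom(f) = \{a_1 < \cdots < a_k\}$ and a point $a \in \Omega \setminus \dom(f)$, the element $a$ lies in exactly one of the $k+1$ intervals determined by $\dom(f)$; an extension $f \cup \{(a, b)\}$ is order-preserving precisely when $b$ lies in the corresponding interval of $\ran(f)$, and then every such $b$ works. The hard part --- such as it is --- is seeing that this interval is infinite, which is exactly where the order type $\mathbb{Q}$ is used: density guarantees the bounded intervals are infinite, and the absence of endpoints guarantees the two unbounded intervals are infinite. Hence $\{b \in \Omega : f \cup \{(a,b)\} \in \mathcal{F}\}$ is infinite and (4) holds.

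With $(\mathbb{M}, \mathcal{F})$ a flexible pair, Theorem~\ref{main} supplies the desired $G$; since $\overline{\mathbb{M}}$ is precisely the set of type-$\mathbb{Q}$ orderings of $\Omega$, the transitivity of the $G$-action on $\overline{\mathbb{M}}$ is the statement of the corollary.
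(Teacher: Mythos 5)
Your proposal is correct and matches the paper's approach: the paper simply notes that it is ``straightforward to check'' that a $\mathbb{Q}$-type order on $\Omega$ is inductively flexible and then invokes Theorem~\ref{main}, and your argument (taking $\mathcal{F}$ to be the finite partial order-isomorphisms and using density, absence of endpoints, and homogeneity) is exactly the intended verification. No issues.
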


Define an $\aleph_0$-section on $\Omega$ to be a partition of $\Omega$ into $\aleph_0$ many subsets of cardinality $\aleph_0$.  One can consider an $\aleph_0$-section on $\Omega$ as a collection $\{R_{\ell}\}_{\ell < \omega}$ of unary relations satisfying the following three requirements

\begin{enumerate}
\item[$(s)_1$] $(\forall \ell < \omega)|\{a \in \Omega: R_{\ell} a\}| = \aleph_0$;

\item[$(s)_2$] $(\forall \ell_0 < \ell_1 < \omega)(\forall a \in \Omega) \neg R_{\ell_0}a \vee \neg R_{\ell_1}a$; and

\item[$(s)_3$] $(\forall a \in \Omega)(\exists \ell < \omega) R_{\ell}a$

\end{enumerate}

\noindent but with the caveat that two such collections of relations $\{R_{\ell}\}_{\ell < \omega}$, $\{R_{\ell}'\}_{\ell < \omega}$ are considered identical provided there is some $\sigma \in \Sym(\omega)$ with $R_{\ell}a \Longleftrightarrow R_{\sigma(\ell)}' a$.  It is easy to see that a model $\mathbb{M} = (\Omega, \{R_{\ell}\}_{\ell < \omega})$ with the set of relations satisfying $(s)_1$ - $(s)_3$ is inductively flexible.  Thus $G$ from Theorem \ref{main} will act transitively on $\overline{\mathbb{M}}$ and \emph{a fortiori} $G$ will act transitively on $\aleph_0$-sections.  So we may record the following.

\begin{corollary}\label{sections}  (ZFC + MA($\sigma$-centered))  There exists a subgroup $G < \Sym(\Omega)$ (e.g. $G$ from Theorem \ref{main}) which is a dense proper subgroup acting transitively on $\aleph_0$-sections.
\end{corollary}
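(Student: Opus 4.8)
The plan is to realize an $\aleph_0$-section as an inductively flexible relational model, apply Theorem \ref{main} to obtain transitivity on the labeled isomorphism class, and then descend to the unlabeled quotient.

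First I would verify that any model $\mathbb{M} = (\Omega, \{R_\ell\}_{\ell < \omega})$ whose relations satisfy $(s)_1$--$(s)_3$ is inductively flexible. Writing $A_\ell = \{a \in \Omega : R_\ell a\}$, conditions $(s)_1$--$(s)_3$ say exactly that the $A_\ell$ partition $\Omega$ into infinitely many infinite blocks, and $\Aut(\mathbb{M})$ is precisely the group of block-preserving permutations. Let $\mathcal{F}$ be the collection of all finite partial injections $f$ on $\Omega$ for which $a$ and $f(a)$ lie in a common block for every $a \in \dom(f)$. Each such $f$ extends to an automorphism (extend $f$ inside each infinite block to a bijection of that block), so the elements of $\mathcal{F}$ are partial automorphisms of finite domain. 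Conditions (1)--(3) of inductive flexibility are immediate, and condition (4) holds because for $a \in A_\ell \setminus \dom(f)$ the admissible images $b$ are exactly the elements of the infinite set $A_\ell \setminus \ran(f)$.

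With inductive flexibility established, Theorem \ref{main} yields that $G$ acts transitively on $\overline{\mathbb{M}}$. I would then pass to sections by an equivariance argument. Let $X = \overline{\mathbb{M}}$ be the set of all labeled $\aleph_0$-sections on $\Omega$ and let $Y$ be the set of unlabeled $\aleph_0$-sections; there is a surjection $\pi : X \to Y$ carrying a labeled partition to its underlying partition, and $\pi$ is $G$-equivariant for the natural actions $g \cdot (A_\ell)_\ell = (g(A_\ell))_\ell$ on $X$ and $g \cdot \{A_\ell\}_\ell = \{g(A_\ell)\}_\ell$ on $Y$. Given $P, P' \in Y$, choose labelings $x, x' \in X$ with $\pi(x) = P$ and $\pi(x') = P'$. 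Since any two labeled $\aleph_0$-sections on $\Omega$ are isomorphic (match the countably many countably infinite blocks block by block), we have $x' \in \overline{\mathbb{M}}$ where $\mathbb{M}$ carries the labeling $x$, so transitivity on $X$ gives $g \in G$ with $g \cdot x = x'$. Applying $\pi$ yields $g \cdot P = \pi(g \cdot x) = \pi(x') = P'$, the desired transitivity. Properness and density of $G$ are inherited directly from Theorem \ref{main}.

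The argument is mostly routine; the one step deserving attention is the equivariant descent from the labeled class $\overline{\mathbb{M}}$ to the unlabeled sections $Y$, that is, the verification that quotienting by $\Sym(\omega)$-relabeling does not destroy transitivity. This is exactly what the phrase \emph{a fortiori} conceals, and it is harmless precisely because the labeled action is the finer one: the isomorphism $g$ realizing $g \cdot x = x'$ is in particular a block-to-block bijection, and hence maps $P$ onto $P'$ irrespective of how the blocks are indexed.
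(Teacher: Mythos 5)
Your proposal is correct and follows exactly the paper's route: verify that a model satisfying $(s)_1$--$(s)_3$ is inductively flexible, invoke Theorem \ref{main} for transitivity on the labeled class $\overline{\mathbb{M}}$, and descend to unlabeled $\aleph_0$-sections. The paper leaves the flexibility check as ``easy to see'' and the descent as ``a fortiori''; you have merely supplied the details (the block-preserving family $\mathcal{F}$ and the equivariant surjection $\pi$), both of which are correct.
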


Peter M. Neumann asked whether there is a proper subgroup $G < \Sym(\Omega)$ which acts transitively on $\Omega$ and transitively on $\mathbb{Q}$-type orderings on $\Omega$ \cite[Problem 9.42]{KhMaz}.  Since our group $G$ is dense, it is $k$-transitive for each $k < \omega$, and therefore transitive, and so a consistent positive answer to his question follows from Corollary \ref{Qtype}.  Similarly he asks for a proper transitive subgroup of $\Sym(\Omega)$ which is transitive on $\aleph_0$-sections \cite[Problem 9.41 c)]{KhMaz}, and Corollary \ref{sections} supplies a consistent positive answer.

\end{section}

\end{document}